\newcommand{\ip}[2]{\langle #1,#2 \rangle}
\newcommand{\eq}[1]{\mbox{{\rm(\hspace{-.35em}~\ref{#1})}}}
\def\ds{\displaystyle}
\newtheorem{lem}{Lemma}
\newtheorem{rem}{Remark}
\newtheorem{assum}{Assumption}
\title{Adaptation and optimization of synchronization gains in networked
distributed parameter systems}
\author{Michael A. Demetriou
\thanks{The author is with
Aerospace Engineering Program,
Mechanical Engineering Dept,
Worcester Polytechnic Institute,
Worcester, MA 01609, USA, {\tt\small mdemetri@wpi.edu}.
The author gratefully acknowledges financial support from
the AFOSR, grant FA9550-12-1-0114.}%
}
\begin{document}

\maketitle

\begin{abstract}
This work is concerned with the design and effects of the synchronization gains on the
synchronization problem for a class of networked distributed parameter systems.
The networked systems, assumed to be described by the same evolution
equation in a Hilbert space, differ in their initial conditions.
The proposed synchronization controllers aim at achieving both
the control objective and the synchronization objective. 
To enhance the synchronization, as measured by the norm of the
pairwise state difference of the networked systems, an adaptation of the 
gains is proposed. An alternative design arrives at constant gains
that are optimized with respect to an appropriate measure of synchronization.
A subsequent formulation casts the control and synchronization design problem into
an optimal control problem for the aggregate systems.
An extensive numerical study examines the various aspects of the optimization
and adaptation of the gains on the control and synchronization
of networked 1D parabolic differential equations. 
\end{abstract}

\begin{keywords}
Distributed parameter systems;
distributed interacting controllers;
networked systems;
adaptive synchronization
\end{keywords}

\abovedisplayskip=1ex 
\belowdisplayskip=1ex 
\abovedisplayshortskip=1ex 
\belowdisplayshortskip=1ex 

\section{Introduction} \label{sec1}

The problem of synchronization of dynamical systems witnessed a surge
of interest in the last few years, primarily for finite dimensional systems
\cite{SuBook2013,WuIEEECS95,LewisTAC11,SaberiIJRNC11,StoorvogelACC11}.
Adaptive and robust control techniques were considered primarily for system
with linear dynamics. A special case of nonlinear systems, the Lagrangian
systems which describe mobile robots and spacecraft,
also considered aspects of synchronization control
\cite{LewisSMC11,Slotine2009,ShanIET08,Min2012,KrogstadCDC06,RenIJC09,Chung2009}.

For distributed parameter systems (DPS), fewer results can be found \cite{DemetriouSCL2013,DemetriouAutomatica2010,WuIEEETCS12,Wang2012,Ambrosio12,Rao13}.
In \cite{WuIEEETCS12}, a system of coupled diffusion-advection PDEs 
was considered and conditions were provided for their synchronization. 
In a similar
fashion \cite{Ambrosio12} considered coupled reaction–diffusion systems of
the FitzHugh-Nagumo type and classify their stability and synchronization.
In the same vein, \cite{Rao13} examined coupled hyperbolic PDEs and through
boundary control proposed a synchronization scheme.
Somewhat different spin but with essentially a similar
framework of coupled PDEs was considered in \cite{Wang2012},
where an array of linearly coupled neural
networks with reaction-diffusion terms and delays were considered.
However, designing a synchronizing control law for uncoupled PDE systems
has not appeared till recently \cite{DemetriouSCL2013}, where a
special class of PDEs, namely those with a Riesz-spectral state operator,
were considered. An unresolved problem is that of a network of uncoupled PDE systems
interacting via an appropriate communication topology. Further, the choice and optimization
of the synchronization gains has not been addressed. Such an unsolved problem is being
considered here.

The objective of this note is to extend the use of the edge-dependent scheme 
to a class of DPS. The proposed controllers,
parameterized by the edge-dependent gains which are associated with the 
elements of the Laplacian matrix of the graph topology, 
are examined in the context of optimization and adaptation. 
One component of the proposed linear controllers is responsible for the control
objective, assumed here to be regulation. The other component, which is  used
for enforcing synchronization, includes the weighted pairwise state differences. 
When penalizing the disagreement of the networked states, 
one chooses the weights in proportion to their disagreement. This can be done
when viewing all the networked systems collectively by optimally
choosing all the weights, or by adjusting these gains adaptively.
The contribution of this work is twofold:
\begin{itemize}
\item It proposes the optimization of the synchronization gains,
by considering the aggregate closed-loop systems and 
minimizes an appropriate measure of synchronization. Additionally,
it casts the control and synchronization design into an optimal
control problem for the aggregate systems with an LQR cost functional.

\item It provides a Lyapunov-based adaptation of the synchronization gains
as a means of improving the synchronization amongst a class of networked
distributed parameter systems described by infinite dimensional systems.
\end{itemize} 
The outline of the manuscript is as follows. The class of systems under consideration
is presented in Section~\ref{sec2}. The synchronization and control design objectives
are also presented in Section~\ref{sec2}. The main results on the choice
of adaptive and constant edge-dependent synchronization gains, including
well-posedness and convergence of the resulting closed-loop systems are given in Section~\ref{sec3}.
Numerical studies for both constant and adaptive gains are presented in Section~\ref{sec4}
with conclusions following in Section~\ref{sec5}.

\section{Mathematical framework and problem formulation}\label{sec2}

We consider the following class of infinite dimensional systems
with identical dynamics but with different initial conditions
on the state space $\mathcal{H}$
\begin{equation}\label{eq1}
\dot{x}_{i}(t) = A x_{i}(t) +  B_{2} u_{i}(t),
\hspace{0.5em} x_{i}(0)=x_{i0}\in \mathcal{D}(A),
\end{equation}
for $i=1\ldots,N$.
The state space $\{ \mathcal{H}, \ip{\cdot}{\cdot}_{\mathcal{H}},|\cdot|_{\mathcal{H}}\}$ is a Hilbert
space, \cite{Lions71}. To allow for a wider class of state and possibly input and output operators,
we formulate the problem in a space setting associated with a Gelf'and triple \cite{Showalter}.
Let $\{\mathcal{V}, \|\cdot\|_{\mathcal{V}}\}$ be a reflexive Banach space that is
densely and continuously embedded in $\mathcal{H}$ with
$\mathcal{V} \hookrightarrow \mathcal{H} \hookrightarrow \mathcal{V}^{*}$
with the embeddings dense and continuous where $\mathcal{V}^{*}$ denotes
the continuous dual of $\mathcal{V}$, \cite{Showalter}. The input space
$\mathcal{U}$ is a finite dimensional Euclidean space of controls.
In view of the above, we have that the state operator $A \in  \mathcal{L}(\mathcal{V},\mathcal{V}^{*})$
and the input operator $B_{2} \in  \mathcal{L}(\mathcal{U},\mathcal{H})$.

The \emph{synchronization objective} is to choose the control signals $u_{i}$,
$i=1,\ldots,N$, so that all pairwise differences asymptotically converge (in norm) to zero
\begin{equation} \label{eq2}
\lim_{t\rightarrow \infty} |x_{i}(t)-x_{j}(t)|_{\mathcal{H}} =0, \hspace{1em} \forall i,j=1,\ldots,N.
\end{equation}
An alternative and weaker convergence may consider \emph{weak synchronization} via
$$
\lim_{t\rightarrow \infty} \ip{x_{i}(t)-x_{j}(t)}{\varphi}_{\mathcal{V},\mathcal{V}^{*}}=0,
\hspace{0.25em} \forall i,j=1,\ldots,N, \hspace{0.25em} \varphi \in \mathcal{V}.
$$
An appropriate measure of synchronization is the \emph{deviation from the mean}
\begin{equation} \label{eq3}
z_{i}(t) = x_{i}(t) - \frac{1}{N}\sum_{j=1}^{\infty} x_{j}(t),
\hspace{1em} i=1,\ldots,N,
\end{equation}
which can also be viewed as the output-to-be-controlled \cite{LiuACC09}, and which measures
the disagreement of state $x_{i}(t)$ to the average state of all agents. It is easlily
observed that asymptotic norm convergence of each $z_{i}(t)$, $i=1,\ldots,N$ to zero implies
asymptotic norm convergence of all pairwise differences $x_{i}(t)-x_{j}(t)$ to zero
and vice-versa.

When examining the well-posedness of the $N$ systems, one must consider
them collectively. This motivates the definition of the state space
$\mathbb{H}= (\mathcal{H})^{N}$.
The spaces $\mathbb{V}$ and $\mathbb{V}^{*}$
are similarly defined via $\mathbb{V}= (\mathcal{V})^{N}$
and $\mathbb{V}^{*}= (\mathcal{V}^{*} )^{N} $ with
$\mathbb{V} \hookrightarrow \mathbb{H} \hookrightarrow \mathbb{V}^{*}$. Similarly,
define the space $\mathbb{U}= (\mathcal{U})^{N}$.

An undirected graph $G =(V,E)$ is assumed to describe the communication topology for the
$N$ networked PDE systems. The nodes $V =\{1,2,\ldots,N\}$ represent the agents (PDE systems)
and the edges $E \subset V\times V$ represent the communication links between the
networked systems \eqref{eq1}. The set of systems (neighbors) that the $i$th system
is communicating with is denoted by $N_{i} = \{ j: (i,j) \in E\}$, \cite{Godsil2001}.

The parameter space $\Theta \in \mathbb{R}^{N\times N}$ is defined
as the space of $N\times N$ (Laplacian) matrices $L$ with the property that
$L_{ii}=-\sum_{j \in N_{i}}^{N}L_{ij}>0$, $i=1,\ldots,N$, i.e. we have
$$
\Theta = \big\{ L \in \mathbb{R}^{N\times N} \, : \,  L_{ii}=-\sum_{j \in N_{i}}^{N}L_{ij} >0 \big\}.
$$
The space $\{\mathbb{H},\ip{\cdot}{\cdot}_{\mathbb{H}} \}$ is a Hilbert space with inner product
$$
\ip{\Phi}{\Psi}_{\mathbb{H}} = \ip{\phi_{1}}{\psi_{1}}_{\mathcal{H}}
+ \ip{\phi_{2}}{\psi_{2}}_{\mathcal{H}} + \ldots  + \ip{\phi_{N}}{\psi_{N}}_{\mathcal{H}},
$$
with $\Phi=\{\phi_{1},\phi_{2},\ldots , \phi_{N}\}, \Psi=\{\psi_{1},\psi_{2},\ldots , \psi_{N}\} \in \mathbb{H}$.
In view of the above, the deviation from the mean \eqref{eq3} can be written in terms of
the aggregate state vector and the aggregate deviation from the mean as
\begin{equation} \label{eq4}
Z(t) = C_{1}X(t),
\end{equation}
where
$X(t) =  [\begin{array}{ccc}
x_{1}(t) &
\hdots &
x_{N}(t)  \end{array} ]^{T}$,
$Z(t) = [\begin{array}{ccc}
z_{1}(t) &
\hdots &
z_{N}(t)  
\end{array}]^{T}$, 
$C_{1}= \mathbf{I}_{N}-\frac{1}{N} \mathbf{1}_{N}\cdot \mathbf{1}_{N}^{T}$,
where $\mathbf{I}_{N}$ denotes the $N$-dimensional identity matrix understood in the sense
of each entry being the identity operator on $\mathcal{H}$. Similarly, $\mathbf{1}_{N}$
denotes the $N$-dimensional column vector of $1$'s, similarly understood in the sense
of $\mathbf{1}_{N}\cdot \mathbf{1}_{N}^{T}$ being the $N\times N$ matrix whose entries
are the identity operator on $\mathcal{H}$. The matrix operator $C_{1}$ corresponds to the graph
Laplacian matrix operator with all-to-all connectivity with $NC_{1}=L$.
In view of this, the synchronization objective in \eqref{eq2} can equivalently be stated as
$\lim_{t\rightarrow \infty} |Z(t)|_{\mathbb{H}} = 0$,
and together with the control objective, assumed here to be state regulation, is
combined to give rise to the design objective of the networked systems \eqref{eq1}.

\noindent \textbf{Design objectives:} Design control signals for the networked systems \eqref{eq1} such that
\begin{equation} \label{eq5}
\left\{
\begin{array}{lcl}
\ds \lim_{t\rightarrow \infty} |X(t)|_{\mathbb{H}} = 0, 
&&  \mbox{(\textbf{state regulation})}
\\ \noalign{\medskip}
\ds \lim_{t\rightarrow \infty} |Z(t)|_{\mathbb{H}} = 0,
&&
\mbox{(\textbf{synchronization})}  
\end{array} \right.
\end{equation}
\begin{rem}
Please notice that regulation of $X(t)$ to zero (in $\mathbb{H}$ norm)
immediately implies synchronization, but the converse
cannot be guaranteed. Careful examination of $Z(t)=C_{1}X(t)$ sheds light to this case, since
the matrix operator $C_{1}$, which corresponds to the graph Laplacian with all-to-all connectivity,
has a zero eigenvalue.
\end{rem}

\section{Main results: edge-dependent synchronization gains} \label{sec3}

The $N$ systems in \eqref{eq1} are considered
with each state $x_{i}$ available. A leaderless configuration
is assumed and thus each agent  will only access the states of its neighboring agents
as dictated by the communication topology.
A standing assumption for the systems in \eqref{eq1} is now presented.
\begin{assum} \label{assum1}
Consider the networked systems in \eqref{eq1}. Assume the following
\begin{enumerate}
\item The state $x_{i}(t)$ of each system is available to the $i$th system and also
to all the other networked systems that is linked to as dictated by the communication
topology, assumed here to be described by an undirected connected graph.

\item The operator $A$ generates a $C_{0}$ semigroup on $\mathcal{H}$
and for any $u_{i}\in L_{2}(0,\infty)$, the systems \eqref{eq1} are well-posed
for any $x_{i}(0)\in D(A)$.

\item The pair $(A,B_{2})$ is approximately
controllable\footnote{Normally, one would require that the pair $(A,B_{2})$ be exponentially
stabilizable. When the operator $A$ generates an exponentially stable $C_{0}$ semigroup, then
one only requires approximate controllability \cite{Curtain95}.},
i.e. there exists a feedback gain operator $K\in \mathcal{L}(\mathcal{H},\mathcal{U})$
such that the operator
$A_{c} \triangleq A - B_{2}K$,
generates an exponentially stable $C_{0}$ semigroup, with the property that
\begin{equation} \label{eq6}
A_{c}+A_{c}^{*} \leq -\kappa I, \hspace{1em}
\kappa >0.
\end{equation}
\end{enumerate}
\end{assum}
The operator equation \eqref{eq6} is a simplified version
of the operator Lyapunov function \cite{Tucsnak09}.

For simplicity, denote the differences of $x_{i}$ and $x_{j}$ by
$x_{ij}(t) \triangleq x_{i}(t)-x_{j}(t)$, $j\in N_{i}$, $i=1,\ldots,N$.
The controllers with \emph{constant edge-dependent synchronization gains}
$\alpha_{ij}$ are given by
\begin{equation} \label{eq7}
u_{i}(t) = -Kx_{i}(t) - F \sum_{j\in N_{i}}\alpha_{ij} x_{ij}(t), \hspace{0.5em} i=1,\ldots,N.
\end{equation}
whereas with \emph{adaptive edge-dependent synchronization gains} $\alpha_{ij}(t)$ are given by
\begin{equation} \label{eq8}
u_{i}(t) = -Kx_{i}(t) - F \sum_{j\in N_{i}}\alpha_{ij}(t) x_{ij}(t), \hspace{0.5em} i=1,\ldots,N.
\end{equation}

The control signals consist of the local controller used to achieve the control objective (regulation)
and a networked component required for enforcing synchronization. The feedback operator
$K\in \mathcal{L}(\mathcal{H},\mathcal{U})$ is chosen so that $(A-B_{2}K)$ generates an exponentially
stable $C_{0}$ semigroup on $\mathcal{H}$ and the synchronization gain $F\in \mathcal{L}(\mathcal{H},\mathcal{U})$ is chosen so that certain synchronization conditions
are satisfied.

Both \eqref{eq7}, \eqref{eq8} will be considered below and different methods
for choosing the edge-dependent gains $\alpha_{ij}$ and $\alpha_{ij}(t)$ will
be described.
\subsection{Adaptive edge-dependent synchronization gains}

A way to enhance the synchronization of the networked systems, is to employ
adaptive strategies to tune the strengths of the network nodes interconnections
as was similarly addressed for finite dimensional systems
\cite{deLellisTAC12,DeLellisAutomatica09,DeLellisChaos08,DeLellisIEEE2010}.

In the case of adaptive synchronization gains, the closed-loop systems are given by
\begin{equation} \label{eq9}
\dot{x}_{i}(t)=(A-B_{2}K)x_{i}(t) - B_{2}F \sum_{j\in N_{i}}\alpha_{ij}(t) x_{ij}(t),
\hspace{1em} x_{i}(0)\in D(A),
\end{equation}
for $i=1,\ldots,N$.
To derive the adaptive laws for the edge-dependent gains, one considers the following
Lyapunov-like functionals
$$
V_{i}(x_{i},\alpha_{ij})=    |x_{i}(t)|^{2}_{\mathcal{H}} +   \sum_{j\in N_{i}} \alpha_{ij}^{2}(t),
\hspace{1em} i=1,\ldots,N.
$$
Using \eqref{eq6}, \eqref{eq9}, its time derivative is given by
$$
\dot{V}_{i}  =   \ds -\kappa |x_{i}|^{2}_{\mathcal{H}}  
+ 2\sum_{j\in N_{i}}\alpha_{ij}
\left(\dot{\alpha}_{ij} - \ip{x_{i}}{BFx_{ij}}_{\mathcal{H}} \right).
$$
While the choice $\dot{\alpha}_{ij} = \ip{x_{i}}{BFx_{ij}}_{\mathcal{H}}$
results in $\dot{V}_{i} \leq - \kappa |x_{i} |^{2}_{\mathcal{H}}$, one may consider
\begin{equation} \label{eq10}
\dot{\alpha}_{ij} = \ip{x_{i}}{BFx_{ij}}_{\mathcal{H}} - \sigma \alpha_{ij},
\hspace{0.25em} j\in  N_{i}, \hspace{0.25em} i=1,\ldots,N,
\end{equation}
where $\sigma > 0$ are the adaptive gains \cite{IS95}. This results in
$\dot{V}_{i} = -\kappa |x_{i}|^{2}_{\mathcal{H}}  
- 2\sigma \sum_{j\in N_{i}}\alpha_{ij}^{2}$,
$i=1,\ldots,N$. Summing from $i=1$ to $N$
$$
\sum_{i=1}^{N}\dot{V}_{i} = - \sum_{i=1}^{N}\big(  \kappa | x_{i}|_{\mathcal{H}}^{2}
+ 2 \sigma \sum_{j\in N_{i}}\alpha_{ij}^{2} \big)
\leq  -\min\{\kappa, 2 \sigma\}\sum_{i=1}^{N} V_{i} .
$$
For each $i=1,\ldots,N$, one can then show that $|x_{i}|_{\mathcal{H}} \rightarrow 0$
as $t\rightarrow \infty$ and for each $j\in N_{i}$, one also has $\alpha_{ij}\rightarrow 0$
as $t\rightarrow \infty$.

To examine the well-posedness and regularity of the closed loop systems,
the state equations \eqref{eq9} are written in aggregate form  
$$
\frac{\mathrm{d}}{\mathrm{d}t} X(t)   =
(\mathbf{I}_{N} \otimes A_{c}) X(t)
-\left[\begin{array}{c}
\ds B_{2}F \sum_{j\in N_{1}} \alpha_{1j}(t) x_{1j}(t)  \\ \noalign{\medskip}
\vdots \\ \noalign{\medskip}
\ds B_{2}F \sum_{j\in N_{N}} \alpha_{Nj}(t) x_{Nj}(t)
\end{array}\right], 
$$
with $X(0) \in D(\mathbf{I}_{N} \otimes A)$. 
To avoid over-parametrization, we express the adaptive edge-dependent gains $\alpha_{ij}(t)$
in terms of the elements $L_{ij}(t)$ of the time-varying graph Laplacian matrix and thus
$$
\frac{\mathrm{d}}{\mathrm{d}t} X(t)  =
(\mathbf{I}_{N} \otimes A_{c}) X(t)
-\left[\begin{array}{c}
\ds B_{2}F \sum_{j=1}^{N} L_{1j}(t) x_{j}(t)  \\ \noalign{\medskip}
\vdots \\ \noalign{\medskip}
\ds B_{2}F \sum_{j=1}^{N} L_{Nj}(t)x_{j}(t)
\end{array}\right], 
$$
with $X(0) \in D(\mathbf{I}_{N} \otimes A)$.
With this representation  one can write the above compactly as
$$
\frac{\mathrm{d}}{\mathrm{d}t} X(t)=\mathcal{A}_{c}X(t)
-  \mathcal{B}_{2} L(t)     \mathcal{F}  X(t) ,
\hspace{1em}X(0)\in D(\mathcal{A}),
$$
where
$\mathcal{A}=\mathbf{I}_{N} \otimes A$,
$\mathcal{A}_{c} \triangleq \mathbf{I}_{N} \otimes A_{c}$,
$\mathcal{B}_{2} \triangleq  \mathbf{I}_{N}\otimes B_{2}$,
$\mathcal{F} \triangleq \mathbf{I}_{N}\otimes F$.
Following the approach for $\alpha_{ij}(t)$ in \eqref{eq10}, 
the adaptation of the interconnection strengths (elements of $L(t)$) is given in weak form
\begin{equation} \label{eq11}
\ip{  \frac{\mathrm{d}}{\mathrm{d}t} L(t)}{\Lambda}_{\Theta} =
\ip{\mathcal{B}_{2} \Lambda \mathcal{F} X(t)}{X(t)}_{\mathbb{H}}
- \sigma \ip{L(t)}{\Lambda}_{\Theta},
\hspace{0.5em} L(0)\in \Theta,
\end{equation}
for $\Lambda \in \Theta$. For each $\Phi \in \mathbb{V}$, define the 
operator $\mathcal{M}(\Phi)  \, : \, \Theta \rightarrow \mathbb{V}^{*}$ by
\begin{equation} \label{eq12}
\ip{\mathcal{M}(\Phi)\Lambda}{\Psi}_{\mathbb{H}} =
\ip{\mathcal{B}_{2}   \Lambda   \mathcal{F}  \Phi }{\Psi}_{\mathbb{H}},
\hspace{0.25em} \Lambda\in \Theta, \Psi\in \mathbb{V},
\end{equation}
with $\mathcal{M}(\Phi) \in \mathcal{L}(\Theta,\mathbb{V}^{*})$.
For each $\Phi \in \mathbb{V}$ define its Banach space adjoint
$\mathcal{M}^{*}(\Phi) \in \mathcal{L}(\mathbb{V},\Theta)$ by
\begin{equation} \label{eq13}
\ip{\mathcal{M}^{*}(\Phi)\Psi}{\Lambda}_{\Theta}= \ip{\mathcal{M}(\Phi)\Lambda}{\Psi}_{\mathbb{H}} ,
\hspace{0.25em} \Psi\in \mathbb{V}, \Lambda\in \Theta.
\end{equation}
In view of \eqref{eq13}, the adaptation \eqref{eq11} is re-written as
\begin{equation} \label{eq14}
\ip{  \frac{\mathrm{d}}{\mathrm{d}t} L(t)}{\Lambda}_{\Theta} =
\ip{\mathcal{M}^{*}(X(t))X(t)}{\Lambda}_{\Theta}
- \sigma \ip{L(t)}{\Lambda}_{\Theta},
\hspace{0.5em} L(0)\in \Theta,
\end{equation}
with $\Lambda \in \Theta$. Using \eqref{eq12}, \eqref{eq13}, the aggregate
dynamics is given in weak form
\begin{equation} \label{eq15}
\left\{
\begin{array}{l}\ds
\ip{\frac{\mathrm{d}}{\mathrm{d}t} X(t)}{\Phi} = \ip{\mathcal{A}_{c}X(t)}{\Phi}_{\mathbb{H}}
 -\ip{\mathcal{M}(X(t))L(t)}{\Phi}_{\mathbb{H}},  
\\ \noalign{\medskip}
\ds
\ip{  \frac{\mathrm{d}}{\mathrm{d}t} L(t)}{\Lambda}_{\Theta} =
\ip{\mathcal{M}^{*}(X(t))X(t)}{\Lambda}_{\Theta}
- \sigma \ip{L(t)}{\Lambda}_{\Theta},
\\ \noalign{\medskip}
\ds
\Phi\in \mathbb{V},\hspace{0.5em} X(0)\in D(\mathcal{A}),
\hspace{0.5em} L(0),\Lambda \in \Theta,
\end{array} \right.
\end{equation}
or with $\mathcal{X}(t)=(X(t),L(t))$
\begin{equation} \label{eq16}
\begin{array}{l}
\ds \dot{\mathcal{X}}(t)   =
\left[\begin{array}{cc}
\mathcal{A}_{c} & - \mathcal{M}(X(t))[\, \bullet \, ] \\ \noalign{\medskip}
\mathcal{M}^{*}(X(t))[\, \bullet \,] &  -\sigma \mathbf{I}_{N} \end{array}\right]
\mathcal{X}(t)  
\\ \noalign{\medskip} 
\mathcal{X}(0)   \in D(\mathcal{A}) \times \Theta.
\end{array}
\end{equation}
The compact form \eqref{eq16} facilitates the well-posedness of \eqref{eq15},
as it makes use of established results on adaptive control of abstract evolution
equations (equations (2.40), (2.41) of \cite{BDRR98}). 
\begin{lem} \label{lemma1}
Consider the $N$ systems governed by \eqref{eq1} and assume that the pairs
$(A,B_{2})$ satisfy the assumption of approximately controllability
with \eqref{eq6} valid and that the state of each system
in \eqref{eq1} is available to each of its communicating
neighbors. Then the proposed synchronization controllers in
\eq{eq8} result in a closed loop system \eqref{eq9} and an adaptation law
for the edge-dependent gains \eqref{eq10} that culminate in the well-posed abstract
system \eqref{eq16} with a unique local solution
$(X,L)\in C((0,T);\mathbb{H}\times \Theta) \cap C^{1}((0,T);\mathbb{H}\times \Theta)$.
\end{lem}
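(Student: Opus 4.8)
\emph{Proof approach.} The plan is to recast the coupled system \eqref{eq16} as an autonomous semilinear evolution equation on the product Hilbert space $\mathcal{Z} = \mathbb{H}\times\Theta$, equipped with the inner product $\ip{\cdot}{\cdot}_{\mathbb{H}}+\ip{\cdot}{\cdot}_{\Theta}$, and then invoke the standard existence theory for semilinear equations with a locally Lipschitz nonlinearity (the framework of equations (2.40)--(2.41) in \cite{BDRR98}). Writing $\mathcal{X}=(X,L)$, I would split the generator of \eqref{eq16} into its block-diagonal linear part and its state-dependent off-diagonal coupling,
\begin{equation*}
\dot{\mathcal{X}}(t) = \mathbb{A}\,\mathcal{X}(t) + \mathcal{N}(\mathcal{X}(t)),
\end{equation*}
with $\mathbb{A}=\mathrm{diag}\bigl(\mathcal{A}_{c},\,-\sigma\mathbf{I}_{N}\bigr)$ the block-diagonal linear part and $\mathcal{N}(X,L)=\bigl(-\mathcal{M}(X)L,\ \mathcal{M}^{*}(X)X\bigr)$ the coupling term. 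Since $\Theta$ is finite dimensional, $\mathcal{Z}$ is a Hilbert space and the only genuinely infinite-dimensional component is $\mathbb{H}$.

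First I would verify that $\mathbb{A}$ generates a $C_{0}$ semigroup on $\mathcal{Z}$ with domain $D(\mathbb{A})=D(\mathcal{A})\times\Theta$. By Assumption~\ref{assum1} and \eqref{eq6}, $A_{c}$ generates an exponentially stable $C_{0}$ semigroup on $\mathcal{H}$, so $\mathcal{A}_{c}=\mathbf{I}_{N}\otimes A_{c}$ generates the corresponding block-diagonal semigroup on $\mathbb{H}$; the bounded operator $-\sigma\mathbf{I}_{N}$ generates a uniformly continuous semigroup on $\Theta$. The block-diagonal $\mathbb{A}$ therefore generates the product semigroup, which is exponentially stable because $\kappa>0$ and $\sigma>0$.

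The key structural step, and the one I expect to be the main obstacle, is to show that $\mathcal{N}\colon\mathcal{Z}\to\mathcal{Z}$ is locally Lipschitz \emph{into the pivot space} $\mathbb{H}\times\Theta$ rather than only into the weaker $\mathbb{V}^{*}\times\Theta$. Here the boundedness $B_{2}\in\mathcal{L}(\mathcal{U},\mathcal{H})$ is essential: it guarantees that $\mathcal{M}(X)L=\mathcal{B}_{2}L\mathcal{F}X$ takes values in $\mathbb{H}$, so that no fractional-power or admissibility argument is needed and the perturbation remains in $\mathbb{H}$. From \eqref{eq12}--\eqref{eq13} one obtains the bilinear and quadratic bounds $|\mathcal{M}(X)L|_{\mathbb{H}}\le c\,|X|_{\mathbb{H}}\,\|L\|_{\Theta}$ and $\|\mathcal{M}^{*}(X)X\|_{\Theta}\le c\,|X|_{\mathbb{H}}^{2}$, with $c$ depending only on $\|\mathcal{B}_{2}\|$ and $\|\mathcal{F}\|$. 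Since $\mathcal{M}(\cdot)$ and $\mathcal{M}^{*}(\cdot)$ are linear in their arguments, splitting the differences as $\mathcal{M}(X_{1})L_{1}-\mathcal{M}(X_{2})L_{2}=\mathcal{M}(X_{1})(L_{1}-L_{2})+\mathcal{M}(X_{1}-X_{2})L_{2}$, and analogously for the quadratic term, shows that $\mathcal{N}$ is Lipschitz on every bounded subset of $\mathcal{Z}$; being bilinear and quadratic, it is in fact $C^{\infty}$.

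With $\mathbb{A}$ a $C_{0}$ generator and $\mathcal{N}$ locally Lipschitz and autonomous, the variation-of-constants formula together with a contraction-mapping argument on $C([0,T];\mathcal{Z})$ for $T$ small yields a unique local mild solution on a maximal interval $(0,T_{\max})$. Because $\mathcal{X}(0)\in D(\mathbb{A})=D(\mathcal{A})\times\Theta$ and $\mathcal{N}$ is continuously Fr\'echet differentiable, this mild solution is classical, giving the asserted regularity $(X,L)\in C((0,T);\mathbb{H}\times\Theta)\cap C^{1}((0,T);\mathbb{H}\times\Theta)$. Two minor points would complete the argument: first, the weak adaptation \eqref{eq14} is tested only against $\Lambda\in\Theta$, so the dynamics live in the row-sum constraint subspace and the positivity $L_{ii}>0$ persists for small $t$ by continuity, confirming $L(t)\in\Theta$; second, although only local existence is claimed, the Lyapunov estimate $\sum_{i}\dot{V}_{i}\le-\min\{\kappa,2\sigma\}\sum_{i}V_{i}$ already derived bounds $|X(t)|_{\mathbb{H}}^{2}+\sum_{i,j}\alpha_{ij}^{2}(t)$ uniformly in $t$, so the blow-up alternative in fact rules out finite-time blow-up and extends the solution to all $t>0$.
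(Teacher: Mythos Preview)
Your argument is correct and shares the paper's overall strategy---recasting \eqref{eq16} in the abstract framework of equations (2.40)--(2.41) of \cite{BDRR98}---but the implementations diverge in an instructive way. The paper sets up the full Gelfand triple $\mathfrak{Y}\hookrightarrow\mathfrak{X}\hookrightarrow\mathfrak{Y}^{*}$ with $\mathfrak{X}=\mathbb{H}\times\Theta$ and $\mathfrak{Y}=\mathbb{V}\times\Theta$, defines the operators $\mathcal{A}_{\lambda}$ and $G_{\lambda}$ required by Theorem~2.4 of \cite{BDRR98}, and invokes that theorem directly; in particular it is content with $\mathcal{M}(\Phi)\in\mathcal{L}(\Theta,\mathbb{V}^{*})$. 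You instead exploit the standing hypothesis $B_{2}\in\mathcal{L}(\mathcal{U},\mathcal{H})$ to observe that $\mathcal{M}(X)L=\mathcal{B}_{2}L\mathcal{F}X$ actually lands in $\mathbb{H}$, so the coupling $\mathcal{N}$ is a locally Lipschitz (indeed bilinear/quadratic) map into the pivot space and the standard semilinear semigroup theory applies without recourse to the variational machinery. Your route is more elementary and delivers the classical regularity from $\mathcal{X}(0)\in D(\mathbb{A})$ and smoothness of $\mathcal{N}$ directly; the paper's route, by contrast, would continue to work even if $B_{2}$ were only bounded into $\mathcal{V}^{*}$, which is the generality the Gelfand-triple setup of Section~\ref{sec2} was meant to accommodate. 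Your remarks on invariance of $\Theta$ for small time and on global continuation via the Lyapunov bound are also correct and go slightly beyond what the paper records in the proof itself.
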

\begin{proof}
The expression \eqref{eq16} is essentially in the form presented in \cite{BDRR98}.
The skew-adjoint structure of the matrix operator, which reflects the terms that cancel
out due to the adaptation, essentially facilitate 
the establishment of well-posedness. The $\Lambda$-linearity of the term
$\ip{\mathcal{B}_{2} \Lambda \mathcal{F}  \Phi }{\Psi}_{\mathbb{H}}$
along with the fact that $B_{2} \in {\mathcal L}(\mathcal{U},\mathcal{H})$,
$F\in {\mathcal L}(\mathcal{H},\mathcal{U})$ (thereby giving
$\mathcal{B}_{2} \in {\mathcal L}(\mathbb{U},\mathbb{H})$
and $\mathcal{F}\in {\mathcal L}(\mathbb{H},\mathbb{U})$) yield
$\mathcal{M}(\Phi) \in \mathcal{L}(\Theta,\mathbb{V}^{*})$. Since the assumption
on controllability gives $A_{c}$ an exponentially stable semigroup on
$\mathcal{H}$, then one has that $\mathcal{A}_{c}$ generates an
exponentially stable semigroup on $\mathbb{H}$. This allows one
to use the results in \cite{BDRR98} to establish well-posedness.
In particular, one defines $\mathfrak{X} =\mathbb{H} \times \Theta$
endowed with the inner product
$\ip{(\Phi_{1},\Lambda_{1})}{(\Phi_{2},\Lambda_{2})}_{\mathfrak{X}}=
\ip{\Phi_{1}}{\Phi_{2}}_{\mathbb{H}}+\ip{\Lambda_{1}}{\Lambda_{2}}_{\Theta}$.
Additionally, let the space $\mathfrak{Y}=\mathbb{V}\times \Theta$
endowed with the norm $\|(\Phi,\Lambda)\|_{\mathfrak{Y}}^{2}=
\|\Phi\|_{\mathbb{V}}^{2}+\|\Lambda\|_{\Theta}^{2}$. Then we have
that $\mathfrak{Y}$ is a reflexive Banach space with 
$\mathfrak{Y} \hookrightarrow \mathfrak{X} \hookrightarrow \mathfrak{Y}^{*}$.
For $\lambda >0$, the linear operator $\mathcal{A}_{\lambda} : \mathfrak{Y} \rightarrow \mathfrak{Y}^{*}$
in equation (2.40) of \cite{BDRR98} is now defined by
$\ip{\mathcal{A}_{\lambda}(X,L)}{(\Phi,\Lambda)}_{\mathfrak{Y},\mathfrak{Y}^{*}} 
=-\ip{\mathcal{A}_{c}X}{\Phi}+\ip{\lambda L}{\Lambda}_{\Theta}$ and the operator
$G_{\lambda} \, : \, \mathbb{R}^{+} \times \mathfrak{Y} \rightarrow \mathfrak{Y}^{*}$
by
$\ip{G_{\lambda}(t,X,L)}{(\Phi,\Lambda)}_{\mathfrak{Y},\mathfrak{Y}^{*}}=
-\ip{\mathcal{M}(X)L}{\Phi}_{\mathbb{H}}+\ip{\mathcal{M}^{*}(X)\Phi}{\Lambda}_{\Theta}$,
where $t>0$, $(X,\Phi)\in \mathbb{H}$, $(L,\Lambda)\in \Theta$. These then fit
the conditions in Theorem~2.4 in \cite{BDRR98}. In fact, one can extend the local
solutions for all $T>0$ and to obtain
$X \in L_{2}(0,\infty;\mathbb{V})\cap L_{\infty}(0,\infty;\mathbb{H})$,
$L\in H^{1}(0,\infty;\Theta)$ with the control signals $u_{i}\in L_{2}(0,\infty)$,
$i=1,\ldots,N$.
\end{proof}

\begin{rem}
Please note that \eqref{eq16} is used to established well-posedness, but \eqref{eq8},
\eqref{eq9} and \eqref{eq10} are used for implementation. While \eqref{eq14} avoids over
parametrization, it renders the implementation of the synchronization controllers
complex. To demonstrate this, consider scalar systems whose connectivity is described
by the undirected graph in Figure~\ref{fig0}. The aggregate closed loop systems
will need ten unknown edge-dependent gains
$\alpha_{12}$, $\alpha_{14}$, $\alpha_{15}$, $\alpha_{21}$,
$\alpha_{34}$,
$\alpha_{41}$, $\alpha_{43}$, $\alpha_{45}$,
$\alpha_{51}$, $\alpha_{54}$. 
When the Laplacian is used, the fifteen unknown entries of the Laplacian
matrix are 
$L_{11}$, $L_{12}$, $L_{14}$, $L_{15}$, 
$L_{21}$, $L_{22}$, 
$L_{33}$, $L_{34}$, 
$L_{41}$, $L_{43}$, $L_{44}$, $L_{45}$,     
$L_{51}$, $L_{54}$, $L_{55}$. 
Of course when one enforces
$L_{ii}=-\sum_{j \in N_{i}} L_{ij}$, then the number of unknown reduces to eleven.
Nonetheless, \eqref{eq11} is used for analysis and \eqref{eq10} is used
for implementation.
\end{rem}

The convergence, for both state and adaptive gains, is established in the next lemma.
\begin{lem} \label{lemma2}
For $(X,L)$ the solution to the initial value problem \eqref{eq16}, the function
$W \, : \, [0,\infty)\rightarrow \mathbb{R}^{+}$ given by
\begin{equation}
W(t) = |X(t)|_{\mathbb{H}}^{2} + \|L(t)\|^{2}_{\Theta}
\end{equation}
is nonincreasing, $X \in L_{2}(0,T;\mathbb{V}) \cap L_{\infty}(0,T;\mathbb{H})$,
$L\in L_{2}(0,T;\Theta) \cap L_{\infty}(0,T;\Theta)$, with
\begin{equation}
W(t) +  \kappa \int_{0}^{t} |X(\tau)|_{\mathbb{H}}^{2} \, \mathrm{d} \tau 
+  2 \int_{0}^{t} \|L(\tau)\|_{\Theta}^{2} \, \mathrm{d} \tau  \leq W(0),
\end{equation}
and consequently $\lim_{t\rightarrow \infty}W(t) =0$.
\end{lem}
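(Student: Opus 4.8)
The plan is to use $W(t)=|X(t)|_{\mathbb{H}}^{2}+\|L(t)\|_{\Theta}^{2}$ itself as a Lyapunov functional and to differentiate it along the weak dynamics \eqref{eq15}, exploiting the same skew-adjoint cancellation that underpinned the well-posedness of \eqref{eq16} in Lemma~\ref{lemma1}. First I would record the regularity already furnished by Lemma~\ref{lemma1}: the solution lies in $X\in L_{2}(0,T;\mathbb{V})\cap L_{\infty}(0,T;\mathbb{H})$ with $\dot X\in L_{2}(0,T;\mathbb{V}^{*})$ (the weak-solution class in \cite{BDRR98}) and $L\in H^{1}(0,T;\Theta)$. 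This is exactly the regularity that licenses the Lions chain rule, so that $t\mapsto|X(t)|_{\mathbb{H}}^{2}$ and $t\mapsto\|L(t)\|_{\Theta}^{2}$ are absolutely continuous and, for a.e.\ $t$,
$$
\frac{1}{2}\frac{\mathrm{d}}{\mathrm{d}t}|X|_{\mathbb{H}}^{2}=\ip{\dot X}{X},\qquad
\frac{1}{2}\frac{\mathrm{d}}{\mathrm{d}t}\|L\|_{\Theta}^{2}=\ip{\dot L}{L}_{\Theta}.
$$

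The core computation is to test the first line of \eqref{eq15} with $\Phi=X(t)$ and the second with $\Lambda=L(t)$, then add. The two coupling terms are $-\ip{\mathcal{M}(X)L}{X}_{\mathbb{H}}$ from the state equation and $+\ip{\mathcal{M}^{*}(X)X}{L}_{\Theta}$ from the adaptation equation; by the defining adjoint relation \eqref{eq13} (with $\Phi=\Psi=X$, $\Lambda=L$) these are equal and cancel. What survives is
$$
\frac{1}{2}\frac{\mathrm{d}}{\mathrm{d}t}W(t)=\ip{\mathcal{A}_{c}X}{X}_{\mathbb{H}}-\sigma\|L\|_{\Theta}^{2}.
$$
Lifting \eqref{eq6} to the product space gives $\mathcal{A}_{c}+\mathcal{A}_{c}^{*}\leq-\kappa I$ on $\mathbb{H}$, hence $\ip{\mathcal{A}_{c}X}{X}_{\mathbb{H}}\leq-\frac{\kappa}{2}|X|_{\mathbb{H}}^{2}$, and therefore
$$
\frac{\mathrm{d}}{\mathrm{d}t}W(t)\leq-\kappa|X|_{\mathbb{H}}^{2}-2\sigma\|L\|_{\Theta}^{2}\leq0,
$$
the factor $2\sigma$ reproducing the coefficient $2$ in the stated estimate under the normalization $\sigma=1$.

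From here the remaining claims follow without difficulty. Nonincrease of $W$ is immediate from the last inequality, which simultaneously yields the $L_{\infty}(0,T;\mathbb{H})$ and $L_{\infty}(0,T;\Theta)$ bounds; integrating it over $[0,t]$ produces exactly the displayed energy estimate, and finiteness of its right-hand side delivers $X\in L_{2}(0,T;\mathbb{H})$ and $L\in L_{2}(0,T;\Theta)$. The stronger membership $X\in L_{2}(0,T;\mathbb{V})$ is imported from Lemma~\ref{lemma1} (equivalently, from the $\mathbb{V}$-coercivity of the form associated with $-\mathcal{A}_{c}$). Finally, since the right-hand side dominates $-\min\{\kappa,2\sigma\}\,W$, Gr\"onwall's inequality gives $W(t)\leq W(0)\,e^{-\min\{\kappa,2\sigma\}t}$, so that $\lim_{t\to\infty}W(t)=0$ with an exponential rate, consistent with the scalar computation $\sum_{i}\dot V_{i}\leq-\min\{\kappa,2\sigma\}\sum_{i}V_{i}$ carried out earlier.

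The main obstacle is not the algebra but the justification of the chain-rule step: testing the weak equations with the solution itself requires $X(t)\in\mathbb{V}$ (not merely $\mathbb{H}$) for a.e.\ $t$ together with $\dot X\in L_{2}(0,T;\mathbb{V}^{*})$, so that $\ip{\dot X}{X}$ is a legitimate $\mathbb{V}^{*}$--$\mathbb{V}$ duality pairing and the identity $\frac{\mathrm{d}}{\mathrm{d}t}|X|_{\mathbb{H}}^{2}=2\ip{\dot X}{X}$ is valid. This is precisely the regularity supplied by Lemma~\ref{lemma1}, so the care lies in invoking it correctly. A secondary subtlety is that \eqref{eq6} encodes only $\mathbb{H}$-dissipation; the $\mathbb{V}$-norm control needed for $X\in L_{2}(0,T;\mathbb{V})$ must therefore be read off from the coercivity built into the well-posedness result rather than extracted from the Lyapunov computation itself.
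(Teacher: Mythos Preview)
Your proposal is correct and follows essentially the same Lyapunov argument as the paper: differentiate $W$, use the skew-adjoint cancellation of the coupling terms, apply the dissipation estimate \eqref{eq6}, integrate, and conclude via Gronwall. If anything you are more explicit than the paper, which simply computes $\frac{\mathrm{d}}{\mathrm{d}t}W$ and lets the cross terms vanish without comment; your identification of the Lions chain-rule regularity, the adjoint identity \eqref{eq13} behind the cancellation, and the implicit normalization $\sigma=1$ are all accurate refinements of the same proof.
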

\begin{proof}
Consider
$$
\begin{array}{lcl}
\ds  \frac{\mathrm{d}}{\mathrm{d}t}W(t) & =  &
\ds  \frac{\mathrm{d}}{\mathrm{d}t}|X(t)|_{\mathbb{H}}^{2}
+  \frac{\mathrm{d}}{\mathrm{d}t}\|L(t)\|^{2}_{\Theta} \\ \noalign{\medskip}
& = &  \ds
\ip{\frac{\mathrm{d}}{\mathrm{d}t}X(t)}{X(t)}_{\mathbb{H}}
+\ip{X(t)}{\frac{\mathrm{d}}{\mathrm{d}t}X(t)}_{\mathbb{H}} \\ \noalign{\medskip}
&  &  \ds
+ \ip{\frac{\mathrm{d}}{\mathrm{d}t}L(t)}{L(t)}_{\Theta} + \ip{L(t)}{\frac{\mathrm{d}}{\mathrm{d}t}L(t)}_{\Theta}
\\ \noalign{\medskip}
&  = & \ds \ip{\mathcal{A}_{c}X(t)}{X(t)}_{\mathbb{H}} + \ip{X(t)}{\mathcal{A}_{c}X(t)}_{\mathbb{H}}
\\ \noalign{\medskip}
&  &  \ds
 - 2 \|L(t)\|^{2}_{\Theta}  \leq   \ds
-\kappa |X(t)|^{2}_{\mathbb{H}} - 2 \|L(t)\|^{2}_{\Theta}.
\end{array}
$$
Integrating both sides from $0$ to $\infty$ we arrive at
$$
W(t) + \kappa \int_{0}^{\infty} |X(\tau)|^{2}_{\mathbb{H}} \, \mathrm{d} \tau
+ 2 \int_{0}^{t} \|L(\tau)\|^{2}_{\Theta} \, \mathrm{d} \tau \leq W(0).
$$
Application of Gronwall's lemma establishes the convergence of $W(t)$ to zero.
\end{proof}

Due to the cancellation terms in the adaptation of the interconnection strengths,
nothing specific was imposed on the synchronization gain operator other than 
$F\in {\mathcal L}(\mathcal{H},\mathcal{U})$. A simple way to choose this gain is
by setting it equal to the feedback gain $K$ and therefore one arrives at the
aggregate state equations
$$
\begin{array}{l}
\ip{\frac{\mathrm{d}}{\mathrm{d}t}X(t)}{\Phi}_{\mathbb{H}} =
\ip{\mathcal{A}X(t)}{\Phi}_{\mathbb{H}} -
\ip{\mathcal{B}_{2}\left(\mathbf{I}_{N} +L(t)\right) \mathcal{K} X(t)}{\Phi}_{\mathbb{H}}
\\ \noalign{\medskip}
\ip{  \frac{\mathrm{d}}{\mathrm{d}t} L(t)}{\Lambda}_{\Theta} =
\ip{\mathcal{B}_{2} \Lambda \mathcal{K} X(t)}{X(t)}_{\mathbb{H}}
-\ip{L(t)}{\Lambda}_{\Theta},
\end{array}
$$
for $\Lambda \in \Theta$, where $\mathcal{K}=\mathbf{I}_{N} \otimes K$.

\subsection{Optimization of constant edge-dependent synchronization gains}

Similar to the adaptive case \eqref{eq9}, the closed-loop systems with constant
edge-dependent synchronization gains are given, via \eqref{eq7}, by
\begin{equation} \label{eq19}
\dot{x}_{i}(t)=(A-B_{2}K)x_{i}(t) - B_{2}F \sum_{j\in N_{i}}\alpha_{ij} x_{ij}(t) ,
\hspace{0.5em} x_{i}(0)\in D(A),
\end{equation}
for $i=1,\ldots,N$, or in terms of the aggregate states
$$
\frac{\mathrm{d}}{\mathrm{d}t} X(t) = \mathcal{A}_{c}X(t)
-  \mathcal{B}_{2} L \mathcal{F} X(t) ,
\hspace{1em} X(0)\in D(\mathcal{A}).
$$
For simplicity, one chooses the synchronization operator gain $F$ to be identical to the
feedback operator gain $K$ and thus the above closed-loop system is written as
\begin{equation} \label{eq20}
\frac{\mathrm{d}}{\mathrm{d}t} X(t) = \Big( \mathcal{A}
-  \mathcal{B}_{2} \left( \mathbf{I}_{N}+ L \right)  \mathcal{K}\Big) X(t) ,
\hspace{1em} X(0)\in D(\mathcal{A}).
\end{equation}
The well-posedness of \eqref{eq20} can easily be established. Since the operator
$A$ generates a $C_{0}$ semigroup on $\mathcal{H}$, one can easily
argue that $\mathcal{A}$ generates a $C_{0}$ semigroup on $\mathbb{H}$.
Since $L\in \Theta$, then $\mathbf{I}_{N}+L$ is a positive definite matrix.
Consequently the operator $\mathcal{A}
-  \mathcal{B}_{2} \left( \mathbf{I}_{N}+ L \right)  \mathcal{K}$ generates
an exponentially stable $C_{0}$ semigroup on $\mathbb{H}$. Since it was assumed
that $X(0)\in D(\mathcal{A})$, the system admits a unique solution. Furthermore,
one has that $|X(t)|_{\mathbb{H}}$ asymptotically converges to zero.

A possible way to obtain the optimal values of the entries of the Laplacian matrix $L$
is to minimize an associated energy norm. The design criteria are similar to those
taken for the optimal damping distribution for elastic systems governed by second
order PDEs, \cite{FahrooACC06,FahrooCDC06}. In this case one seeks to find $L\in \Theta$
such that the associated energy of the aggregate system \eqref{eq20}, $E(t)$ satisfy
$E(t) \leq Me^{-\omega t}E(0)$, $t >0$.
 
The above is related to the stability of the closed-loop aggregate system and for
that, one needs to require that the spectrum determined growth condition is satisfied.
This condition essentially states that a system has this property of the supremum of
the real part of eigenvalues of the associated generator $\mathcal{A}
-  \mathcal{B}_{2} \left( \mathbf{I}_{N}+ L \right)  \mathcal{K}$ equals the infimum of
$\omega$ satisfying the above energy inequality.
This optimization takes the form of making the system ``more'' stable.

Related to this, an alternative criterion that is easier to implement numerically,
aims at minimizing the total
energy of the aggregate system over a long time period
$$
J=\int_{0}^{\infty}E(\tau)\, \mathrm{d} \tau,
$$
over the set of admissible (Laplacian) matrices $L \in \Theta$. This criterion
is realized through the solution to a $L$-parameterized operator Lyapunov equation
with $L$ constrained in $\Theta$, i.e. any optimal value of $L$ must
satisfy the conditions for graph Laplacian described by $\Theta$. The optimal value $L$
is then given by
$L = \arg \min _{L_{\alpha}\in \Theta} \mbox{tr } \Pi_{\alpha}$
where $\Pi_{\alpha}$ is the solution to the $L_{\alpha}$-parameterized
operator Lyapunov equation
$$
\left(  \mathcal{A}_{c} - \mathcal{B}_{2} L_{\alpha} \mathcal{K} \right)^{*} \Pi_{\alpha}
+ \Pi_{\alpha} \left(  \mathcal{A}_{c} - \mathcal{B}_{2} L_{\alpha} \mathcal{K} \right) +I=0 
\hspace{0.25em} \mbox{in $D(\mathcal{A})$},
\hspace{0.25em} L_{\alpha}\in \Theta.
$$
However, since one would like to enhance synchronization, then the cost is changed to
\begin{equation} \label{eq21}
\begin{array}{lcl}
J_{I} & = & \ds \int_{0}^{t} |X(\tau)|_{\mathbb{H}}^{2}
+ |Z(\tau)|_{\mathbb{H}}^{2} \, d \tau
\\ \noalign{\smallskip}
& = & \ds \int_{0}^{t} \ip{X(\tau)}{(I+C_{1}^{*}C_{1})X(\tau)}_{\mathbb{H}}\, d \tau.
\end{array}
\end{equation}
In view of this, the proposed optimization design is 
\begin{equation} \label{eq22}
\left\{ 
\begin{array}{l}
\mbox{\textbf{Design I: minimize \eqref{eq21} subject to \eqref{eq20} }} \\ \noalign{\smallskip}
\mbox{\textbf{Solution:} } \ds L^{opt} = \arg \min _{L_{\alpha}\in \Theta} \mbox{tr } \Pi_{\alpha}
\\ \noalign{\medskip}
\ds \left(  \mathcal{A}_{c} - \mathcal{B}_{2} L_{\alpha} \mathcal{K} \right)^{*} \Pi_{\alpha}
+ \Pi_{\alpha} \left(  \mathcal{A}_{c} - \mathcal{B}_{2} L_{\alpha} \mathcal{K} \right) 
\\ \noalign{\smallskip}
\hspace*{1em} 
+(I+C_{1}^{*}C_{1})=0,
\hspace{0.5em} L_{\alpha}\in \Theta.
\end{array}  \right.
\end{equation}

The optimization \eqref{eq22} above does not account for the cost of the control law.
If the structure of the control law \eqref{eq7} is assumed with $F=K$ and $K$ chosen such
that $A-B_{2}K$ generates an exponentially stable $C_{0}$ semigroup on $\mathcal{H}$,
then one may consider the effects of the control cost when searching for the optimal
value of the constant graph Laplacian matrix $L$. In this case, the cost
functional in \eqref{eq21} is now modified to 
\begin{equation} \label{eq23}
J_{II} = \int_{0}^{t} \ip{X(\tau)}{(I+C_{1}^{*}C_{1})X(\tau)}_{\mathbb{H}}
+ |U(\tau)|^{2}_{\mathbb{U}}\, d \tau.
\end{equation}
Please note that the term $\ip{U(\tau)}{U(\tau)}_{\mathbb{U}}$ is given explicitly by
$\ip{U(t)}{U(t)}_{\mathbb{U}} = \sum_{i=1}^{N}\ip{u_{i}(t)}{u_{i}(t)}_{\mathcal{U}}$
where $u_{i}$ are given by \eqref{eq7}. Unlike \eqref{eq22}, \eqref{eq23},
the optimization for $L$ in this case must be performed numerically and the optimal value
is
\begin{equation} \label{eq24}
\left\{ 
\begin{array}{l}
\mbox{\textbf{Design II: minimize \eqref{eq23} subject to \eqref{eq20} }}
\\ \noalign{\smallskip} 
\mbox{\textbf{Solution:} } \ds L^{opt} = \arg \min _{L_{\alpha}\in \Theta} J_{II}  . 
\end{array} \right.
\end{equation}

To consider an optimal control for the aggregate system, without assuming a specific
structure of the controller gains $K$ and $F$, but with a prescribed constant graph
Laplacian matrix, one may be able to pose the synchronization problem as 
an optimal (linear quadratic) control problem.
One rewrites \eqref{eq20} without
the assumption that the synchronization operator gain $F$ is equal to
the regulation operator gain $K$. Thus \eqref{eq19} when written is aggregate form
produces
\begin{equation} \label{eq25}
\begin{array}{lcl}
\frac{\mathrm{d}}{\mathrm{d}t} X(t)  & = & \ds    \mathcal{A} X(t)
-  \mathcal{B}_{2} \mathcal{K}  X(t)  -  \mathcal{B}_{2} L \mathcal{F}  X(t)  \\ \noalign{\medskip}
& = & \ds    \mathcal{A} X(t)
+ \mathcal{B}_{2} U_{1}(t)  +  \mathcal{B}_{2} L U_{2}(t) \\ \noalign{\medskip}
& = & \ds    \mathcal{A} X(t)
+\widetilde{\mathcal{B}}_{2} U(t)
\end{array}
\end{equation}
for $X(0)\in D(\mathcal{A})$,
where the augmented input operator and augmented control signal are given by
$$
\widetilde{\mathcal{B}}_{2} =\mathcal{B}_{2} \left[\begin{array}{cc} \mathbf{I}_{N} & L \end{array}\right],
\hspace{2em}
\widetilde{U}(t) = \left[\begin{array}{c} U_{1}(t) \\ \noalign{\smallskip} U_{2}(t) \end{array}\right].
$$
One can then formulate an optimal control policy for 
the aggregate system in $\mathbb{H}$
\begin{equation} \label{eq26}
\frac{\mathrm{d}}{\mathrm{d}t} X(t)  = \mathcal{A} X(t)
+ \widetilde{\mathcal{B}}_{2} \widetilde{U}(t),
\hspace{1em} X(0)\in D(\mathcal{A}),
\end{equation}
as follows: Find $\widetilde{U}$ such that the cost functional 
\begin{equation}  \label{eq27}
J_{III} = \int_{0}^{t} \ip{X(\tau)}{(I+C_{1}^{*}C_{1})X(\tau)}_{\mathbb{H}}
+ |U_{1}(\tau)|^{2}_{\mathbb{U}}
+ |U_{2}(\tau)|^{2}_{\mathbb{U}}\, d \tau.
\end{equation}
is minimized. The solution to this LQR problem is given by
\begin{equation} \label{eq28}
\left\{ 
\begin{array}{l}
\mbox{\textbf{Design III: minimize \eqref{eq27} subject to \eqref{eq26}}}
\\ \noalign{\smallskip}
\mbox{\textbf{Solution:} }
\widetilde{U}(t) = - \widetilde{\mathcal{B}}_{2}^{*} \mathcal{P} X(t) \\ \noalign{\medskip}
\mathcal{A}^{*} \mathcal{P} + \mathcal{P}\mathcal{A} 
- \mathcal{P}\widetilde{\mathcal{B}}_{2}^{*}\widetilde{\mathcal{B}}_{2}\mathcal{P}
 +(I+C_{1}^{*}C_{1})=0.
\end{array}  \right.
\end{equation}
In the event that any form of optimization
for the edge-dependent gains cannot be performed, then a static optimization
can be used; in this case, the edge-dependent gains can be chosen in
proportion to the pairwise state mismatches
$\alpha_{ij} =  |x_{ij}(0)|_{\mathcal{H}}$.

\begin{rem}
In the case of full connectivity, thereby simplifying the Laplacian to
$L=N\mathbf{I}_{N}- \mathbf{1} \cdot \mathbf{1}^{T}$
and when the edge-dependent gains are all identical $\alpha_{ij}=\alpha$, then
one may be able to obtain an expression for the dynamics
of the pairwise differences $x_{ij}=x_{i}-x_{j}$
$$
\dot{x}_{ij}(t) = A_{c}x_{ij}(t) - \alpha N BF x_{ij}(t),
\hspace{1em} x_{ij}(0)\ne 0.
$$
As was pointed out in \cite{DemetriouSCL2013}, when the system operator
is Riesz-spectral and certain conditions on the input operator
are satisfied, one can obtain explicit bounds on the exponential convergence
of $x_{ij}$ (in an appropriate norm) to zero. Additionally for this
case one has that the convergence of $x_{ij}$ is faster than that of
$x_{i}$  and is a function of $\alpha$.
\end{rem}

\section{Numerical studies}\label{sec4}

The following 1D diffusion PDE was considered  
$$
\frac{\partial x_{i}}{\partial t}(t,\xi)=a_{1}\frac{\partial^{2}x_{i}}{\partial t^{2}}(t,\xi)
+b(\xi)u_{i}(t),
\hspace{0.5em} x(t,0)=x(t,1)=0,
$$
The control distribution function $b(\xi)$ was taken to be the approximation
of the pulse function centered at the middle of the spatial domain $[0,1]$
and $a_{1}=0.05$. Using a finite element approximation scheme with
$40$ linear splines, the system was simulated using the ode suite in
Matlab$^\circledR$. A total of $N=5$ networked systems were considered and whose
communication topology was described by the graph in Figure~\ref{fig0}. The feedback gain
was taken to be $K\phi =5\times 10^{-4}\int_{0}^{1} b(\xi) \phi(\xi) \, \mathrm{d} \xi$
and the synchronization gain $F=20K$. The initial conditions for the $5$ networked systems
were taken to be
$x_{1}(0,\xi)=39.4 \sin(1.3\pi \xi) e^{-7\xi^2}$,
$x_{2}(0,\xi)= 12.6 \sin(2.1 \pi \xi) \cos(1.5 \pi \xi)$,
$x_{3}(0,\xi)= 7.6 \sin(3.6\pi\xi) e^{-7\xi^2}$,
$x_{4}(0,\xi)= 2.5\sin(5\pi\xi) e^{-\xi^2}$,
$x_{5}(0,\xi)= -26.2\sin(5.\pi\xi) e^{-7 (\xi-0.5)^2}$.
\begin{figure}[ht]
\begin{center}
\includegraphics[width=5.50cm]{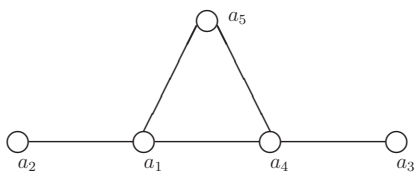}
\caption{Undirected graph on five vertices (PDE systems).}
\label{fig0}
\end{center}
\end{figure}

\subsection{Constant edge-dependent synchronization gains}

Using the control laws
$u_{i}(t) = -k(\xi)x_{i}(t,\xi) - f(\xi) \alpha \sum_{j\in N_{i}}x_{ij}(t,\xi)$
the performance functional was taken to be
\begin{equation} \label{eq30}
J_{II}=\int_{0}^{2} \,\, \sum_{i=1}^{5} \,\, \left(  |x_{i}(\tau,\cdot)|_{L_{2}}^{2}+
|z_{i}(\tau,\cdot)|_{L_{2}}^{2} + u_{i}^{2}(\tau) \right) \, \mathrm{d} \tau,
\end{equation}
where $z_{i}(t,\xi)=x_{i}(t,\xi)-\sum_{j=1}^{5}x_{j}(t,\xi)/5$.
The performance functional was evaluated for a range of values of
the uniform synchronization gain $\alpha$ in the interval $[0,2]$. This
cost is depicted in Figure~\ref{fig1a} and its optimal value 
is attained when $\alpha=0.3$. 

To examine the effects of the synchronization gain,
the norm of the aggregate deviation from the mean was evaluated for
$\alpha=0,0.3$ and $\alpha=2$. The evolution of $|Z(t)|_{\mathbb{H}}$ 
is depicted in Figure~\ref{fig1b}. As expected, the higher the value
of $\alpha$, the faster the convergence. However, only the value $\alpha=0.3$
results in acceptable levels of the deviation from the mean and low values of
the control cost.

\begin{figure*}
\centerline{
\subfigure[Effects of $\alpha$ on the performance functional \eqref{eq27}.]
{\includegraphics[width=8.25cm]{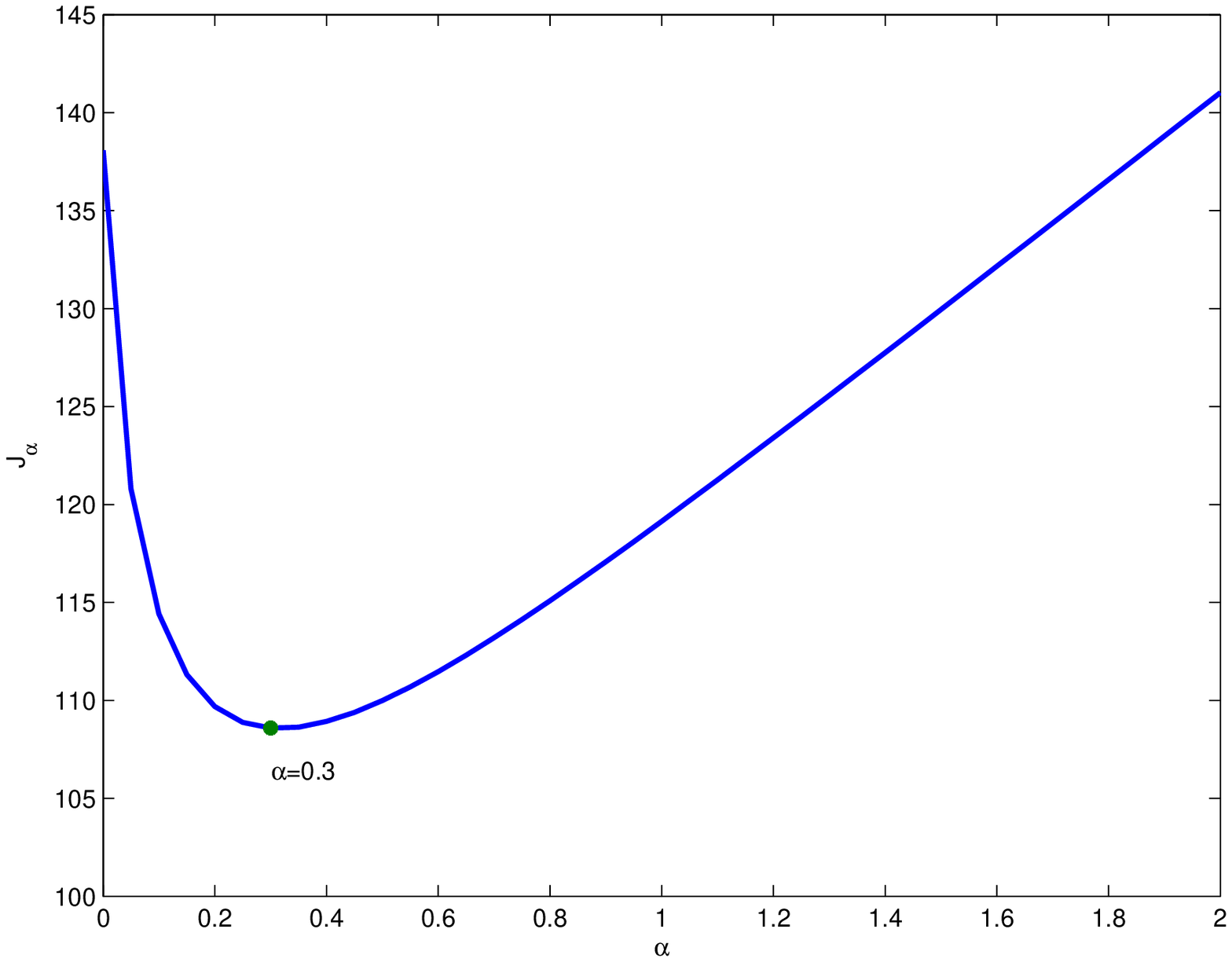}
\label{fig1a} }   \hfil
\subfigure[Norm of the deviation from the mean.]
{\includegraphics[width=8.25cm]{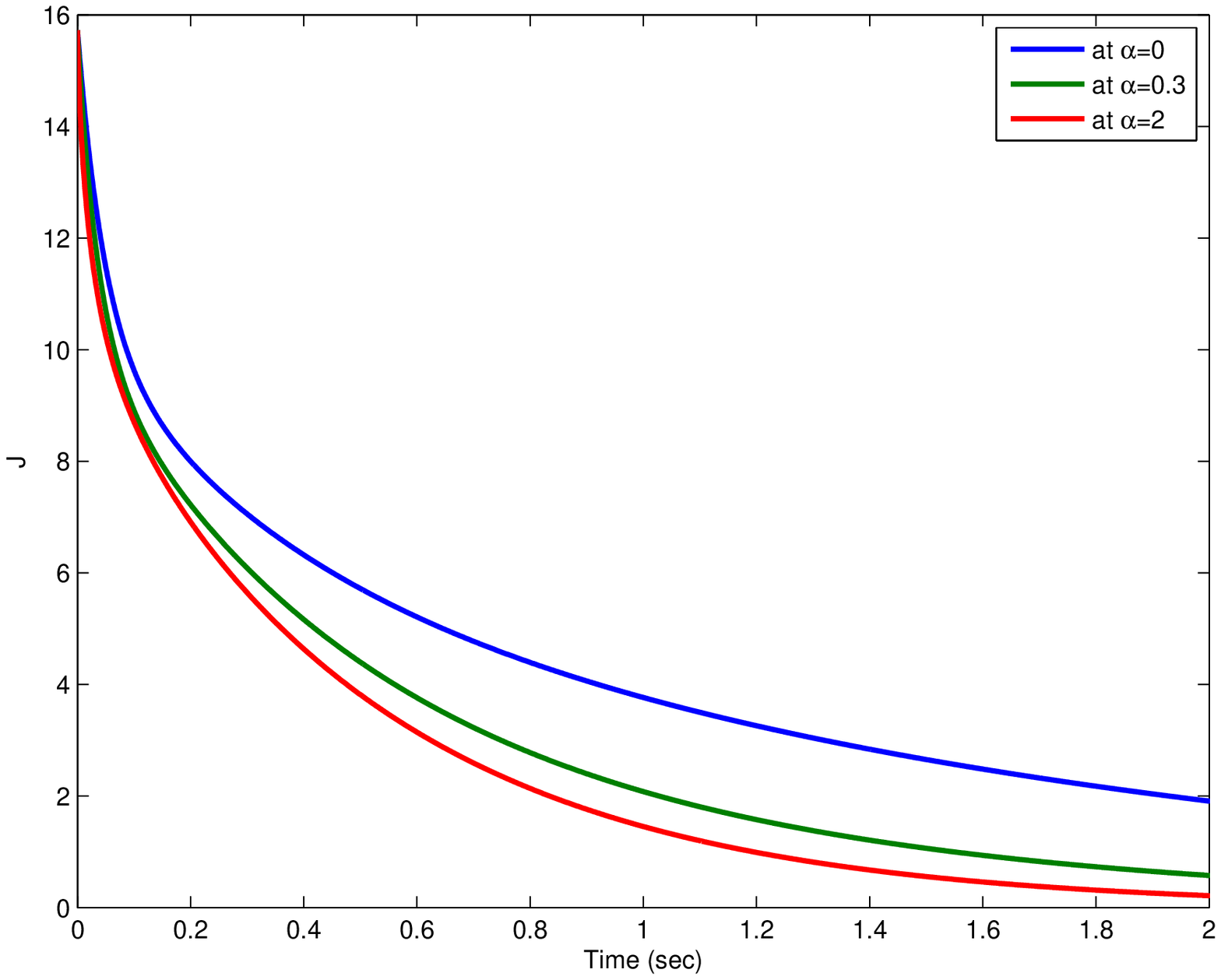}
\label{fig1b} } }
\caption{Constant edge-dependent gains.} \label{fig1}
\end{figure*}

\subsection{Adaptive edge-dependent synchronization gains}

The adaptive controller \eqref{eq8} was applied to the PDE with
$a_{1}=0.1$, $K\phi =5\times 10^{-4}\int_{0}^{1} b(\xi) \phi(\xi) \, \mathrm{d} \xi$,
$F=2K$.
The same initial conditions as in the constant gain case were used.
The adaptations in \eqref{eq10} were implemented with
an adaptive gain of $100$ and $\sigma=10^{-5}$, i.e.
$$
\dot{\alpha}_{ij} =\gamma\left[ \left( \int_{0}^{1}b(\xi)x_{i}(t,\xi)\, \mathrm{d} \xi \right)
 \left( \int_{0}^{1}f(\xi) x_{ij}(t,\xi) \, \mathrm{d} \xi \right)
-  \sigma \alpha_{ij} \right],
$$
Figure~\ref{fig2a} compares 
the adaptive to the constant edge-dependent gains case.
The initial guesses of the adaptive edge-dependent gains
were all taken to be $\alpha_{ij}(0)=1$. The same values 
of $\alpha_{ij}=1$ were used
for the constant case. The norm of the aggregate
deviation from the mean exhibits an improved convergence to zero when
adaptation of the edge-dependent gains is implemented.

The spatial distribution of the mean state ($x_{m}(t,\xi)=\sum_{i=1}^{5}x_{i}(t,\xi)/5$)
is depicted at the final time $t=2$ for both the adaptive and constant gains case in
Figure~\ref{fig2b}. It is observed that when adaptation is implemented,
the mean state converges (pointwise) to zero faster than the constant case.  
\begin{figure*}
\centerline{
\subfigure[Adaptive vs constant edge-dependent gain on $|Z(t)|_{\mathbb{H}}$.]
{\includegraphics[width=8.25cm]{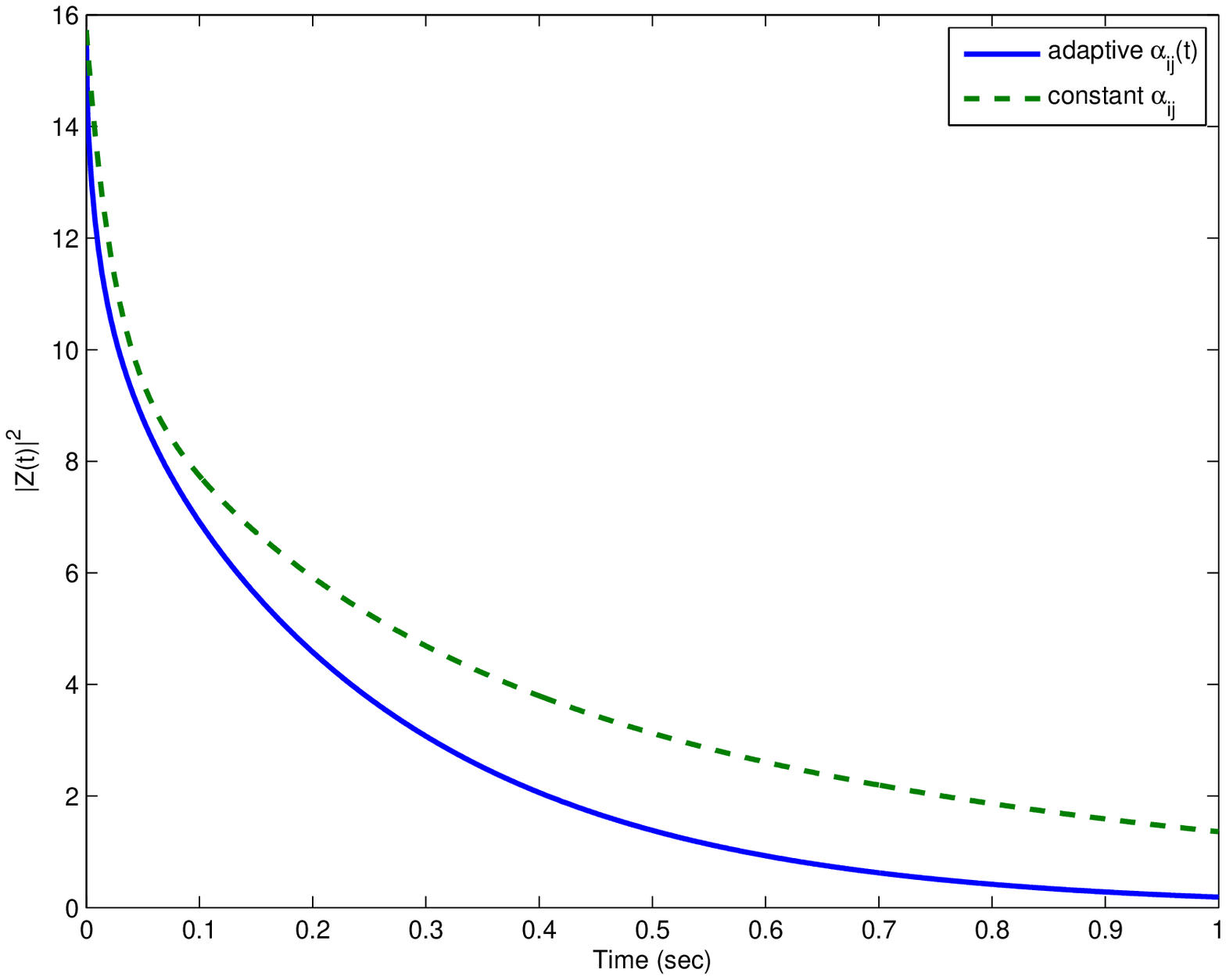}
\label{fig2a} }   \hfil
\subfigure[Spatial distribution of mean state at final time.]
{\includegraphics[width=8.25cm]{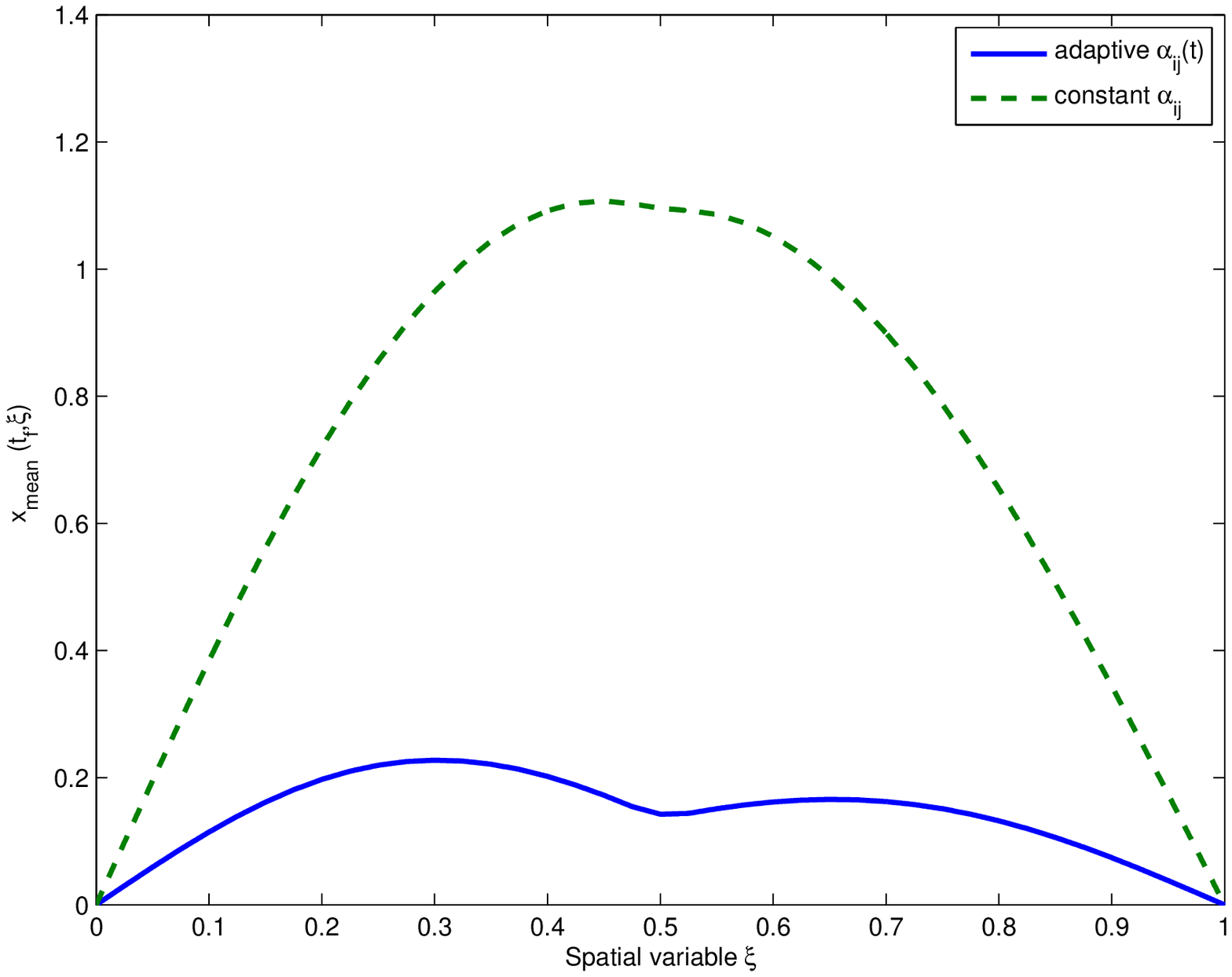}
\label{fig2b} } }
\caption{Adaptive edge-dependent gains.} \label{fig2}
\end{figure*}

\section{Concluding remarks}\label{sec5}

In this note, a scheme for the adaptation of the synchronization gains used in
the synchronization control of a class of networked DPS was proposed. The
same framework allowed for the optimization
of constant edge-dependent gains which was formulated as an optimal
(linear quadratic) control problem of the associated aggregate system of
the networked DPS. The proposed scheme
required knowledge of the full state of the networked DPS.
Such a case represents a baseline for the synchronization of networked
DPS. The subsequent extension to output feedback, whereby each networked DPS
can only transmit and receive partial state information provided
by sensor measurement, will utilize the same abstract framework presented here. 



\end{document}